\newtheorem{definition}{Definition}
\newtheorem{theorem}{Theorem}
\newtheorem{conjecture}{Conjecture}
\newtheorem{lemma}{Lemma}
\newtheorem{corollary}{Corrollary}
\begin{document}

%\title{Being Prime is not necessary for Goldbach Conjecture}
%\title{Statistical Verification and Generalization of the Goldbach Conjecture}
%\title{Failure probability of Goldbach's Conjecture and Generalization to prime-like distributions}
\title{ Violation Probability of Goldbach Conjecture and Generalization to Prime-like Distributions}
\title{  Goldbach Conjecture: Violation Probability and Generalization to Prime-like Distributions}
%\title{Upper Bound on the Failure Probability of Goldbach's Conjecture and Generalization to Prime-like Distributions}
\markright{Generalizing the Goldbach Conjecture}
\author{  Ameneh Farhadian}

%\authorrunning{Short form of author list} % if too long for running head

%\institute{ Shahed University, Tehran, I. R. Iran}
   %           \email{ a.farhadian@shahed.ac.ir}           %  \\
%}

%\date{ \small Mathematics Subject Classifications:  11P32}

\date{10/2/2025}
\maketitle

\begin{abstract}
Due to the distribution of primes among integers, we establish an upper bound for the probability $\mathbb{P}_n$ that the Goldbach conjecture fails.  Assuming the conjecture holds true for all even number less than $2N$, we prove this probability is less than $e^{-N^\alpha}$, where $
\alpha = 1 - \frac{2\ln\ln N}{\ln N}$.
 For large $N$, this probability becomes vanishingly small, effectively precluding the existence of counterexamples in practice. If $N =4 \times 10^{18}$, the probability of a counterexample is less than $e^{-10^{15}}$. Our approach fundamentally depends on the distributional properties of primes rather than their primality per se. This perspective enables a natural generalization of the conjecture to non-prime subsets of integers that exhibit similar distributional characteristics. As a concrete example, we construct new subsets by applying random $\pm 1$ shifts to primes, which preserve the essential prime-like distributional properties. Computational verification confirms that this generalized Goldbach conjecture holds for all even integers up to $2 \times 10^{8}$ within these modified subsets.

\bigskip
\noindent
 \textbf{Keywords:} Goldbach conjecture, prime distribution, probabilistic number theory, exponential bounds, computational verification, generalized Goldbach
\end{abstract}

\noindent

\section{Introduction}
The well-known Goldbach conjecture states that every even integer greater than 2 can be expressed as the sum of two prime numbers. Proposed in 1724, this conjecture remains unproven despite significant efforts  \cite{wang2002goldbach}. However, computer calculations have shown that the conjecture holds for all integers less than $4 \times 10^{18} $ \cite{oliveira2014empirical}.
%
%Recent literature has introduced various approaches to explore this conjecture, emphasizing probabilistic methods and combinatorial insights. Yang's work discusses generating functions as instrumental tools for investigating Goldbach’s conjecture by providing a formal symbolic approach to the relationship between primes and sums \cite{yang2024generating}. This probabilistic perspective aligns with the advancements made by South, who emphasized the importance of combinatorial techniques to demonstrate the conjecture's validity within certain bounds \cite{south2022algebraic}. Recent explorations, such as those by Huang, suggest an intriguing connection between the conjecture and broader combinatorial conjectures like the Elliott-Halberstam conjecture. This relationship posits that under specific distributions, the binary Goldbach conjecture holds validity for sufficiently large integers\cite{huang2020variant}. Such findings exemplify how probabilistic and analytical techniques intertwine to enhance the understanding of prime distributions and their implications for Goldbach’s conjecture. Moreover, the heuristic methods described by Táfula focus on modeling the sums of two primes using probability distributions\cite{tafula2020elementary}. Such methodologies demonstrate the rich interplay between probability, combinatorial theory, and number theory.

Theodor Estermann \cite{estermann1938goldbach} proved in 1938 that almost all even numbers can be expressed as the sum of two primes, in the sense that the proportion of even numbers $\leq N$ admitting such a representation approaches 1 as $N \to \infty$. Earlier, Hardy and Littlewood \cite{hardy1923some} established that the count of even numbers $\leq X$ violating Goldbach's conjecture is bounded above by $X^{1/2 + c}$ for some small $c > 0$. A stronger result was obtained by Montgomery and Vaughan \cite{montgomery1975exceptional}, who demonstrated that almost all even numbers satisfy the conjecture. Specifically, they proved the existence of constants $c, C > 0$ such that for all sufficiently large $N$, at most $CN^{1-c}$ even integers below $N$ cannot be written as the sum of two primes. This implies that the set of even numbers failing Goldbach's conjecture has asymptotic density zero.% Othere researcher work on goldbach Partition function and goldbach comet \cite{bhowmik2020asymptotics, jesus2023vectorizing, ikeda2025average}.

Here, we adopt a slightly different approach to the problem, a modified and extended version of the method originally proposed by Clarke and Shannon\cite{clarke198367}. They applied a combinatorial approach and demonstrated that it is highly unlikely that Goldbach's conjecture is false. Here, for an even number \( 2n \), we form two sets, \( A_n \) and \( B_n \):
\begin{itemize}
    \item  Set $A_n$: Contains the differences \( n - p \) for all primes \( p \) less than \( n \).
    \item Set $B_n$: Contains the differences \( q - n \) for all primes \( q \) between \( n \) and \( 2n \).
\end{itemize}
The conjecture holds if the two sets \( A_n \) and \( B_n \) intersect (i.e., if \( A_n \cap B_n \neq \emptyset \)). Based on the distribution of primes, we compute the probability of the conjecture failing for a given even number---that is, the probability that \( A_n \cap B_n = \emptyset \). This probability is then summed over all even integers from \( 2N \) to infinity, where \( 2N \) is the largest even number for which the conjecture has been numerically verified.

Since our approach relies solely on the distribution of primes rather than primality testing, we extend our findings to all subsets of integers that follow a distribution similar to that of the primes.

In \textbf{Section 2}, we formally define distributions resembling the prime numbers and present a generalized version of the Goldbach conjecture for such subsets. \textbf{Section 3} derives the probability of the conjecture being violated for a single even number and sums this probability over all even integers from \( 2N \) to infinity. We show that this cumulative probability is bounded above by \( e^{-N^{\alpha}} \), where
\[
\alpha \approx 1 - \frac{2\ln{\ln{N}}}{\ln{N}}.
\]
To provide further intuition, \textbf{Section \ref{comp}} computes this probability explicitly for specific values of \( N \). The paper concludes with a summary of results in the final section.
\section{Distribution similar to prime numbers}\label{not_nec}
Here, we define the subsets of natural numbers  whose distribution is similar to prime numbers. Then, we compute the probability of violating the conjecture, not only for primes, but also for any subset whose distribution among natural numbers is similar to primes.

%One may think that the conjecture holds true for $P_t$because the distances of prime numbers also is preserved in $P_t$. Therfore, we have checked conjecture for other subsets of natural numbers as follows. \\
We know that the number of prime numbers less than or equal to some integer $n$ is denoted by $\pi(n)$. It is proved that for large $n$, $\pi(n)$ is approximated by $ \frac{n}{\ln(n)}  $\cite{ireland1990classical}. Now, we define distribution similar to prime numbers\cite{farhadian2022simple}.
\begin{definition}
Let $Q$ be a subset of natural numbers and $\pi_Q(n)$ be the number of integers less than or equal $n$ belong to $Q$. We say  that \textit{the distribution of $Q$ in natural numbers is similar to prime numbers}, if there exists $c \in \mathbb{N}$ such that
\begin{equation}
\vert \pi_Q(n) -\pi(n) \vert <c
\end{equation}
 for all integer $n$.
\end{definition}

According to the definition, if $Q$ is a subset of natural numbers whose distribution is similar to prime numbers, then \[ \pi(n)-c<\pi_Q(n)<\pi(n)+c\]It means that the number of $Q$ elements less than $n$ is equal to the number of primes less then $n$ by at most $c$ error. Therefore, for large $n$, which $\pi(n)$ is approximated by $\frac{n}{\ln(n)}$, we have
 \begin{equation}
 \pi_Q(n)\approx \frac{n}{\ln(n)}
\end{equation}

By randomly addition of +1 and -1 to prime numbers, we can construct a subset $Q'$ of natural numbers whose distribution is similar to prime numbers. For such subsets, we have  $\vert \pi_{Q'}(n) -\pi(n) \vert <2$. In fact, $P_t$ is shifted prime numbers by $t$.\\

\begin{definition}
Let $Q$ be a subset of natural numbers. We say \textit{the Goldbach conjecture holds true for $Q$}, if there exists $N_0  \in \mathbb{N}$ such that for every even integer $2n$ greater than $N_0$, there exist $q_1, q_2 \in Q$ such that $2n=q_1+q_2$.
\end{definition}
%In the next section,
Now, we introduce a subset of integers whose distribution is similar to primes. Then we prove that the Goldbach conjecture holds true for this subset provided that it is true for prime numbers.
%\section{The correctnes of Goldbach Conjecture for a sample subset similar to primes}
Let $P$ be the set of prime numbers and $t$ be an integer number. We define
\begin{equation}
P_t= \lbrace x+t | x \in P \rbrace
\end{equation}
We have $\pi_{P_t}(n)=\pi(n-t)$. Thus,
\begin{equation}
\vert \pi_{P_t}(n) -\pi(n) \vert =\vert \pi(n-t) -\pi(n) \vert<t+1
\end{equation} Thus, the distribution of $P_t$ in natural numbers is similar to prime numbers.
\begin{theorem}\label{asl}
Let $P$ be the set of prime numbers and $c$ be an integer number. We define $P_c= \lbrace x+c | x \in P \rbrace $. If the Goldbach conjecture holds true for the prime numbers, then the Goldbach conjecture also holds true for $P_c$. (That is for any even number $n$ greater that $2c+2$, there exist $p_c,q_c \in P_c$ such that $2n=p_c+q_c$.)
\end{theorem}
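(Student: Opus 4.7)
The plan is to reduce the statement directly to Goldbach's conjecture for primes by exploiting the fact that $P_c$ is just a translate of $P$, so a representation of $2n$ as a sum of two elements of $P_c$ corresponds bijectively to a representation of $2n-2c$ as a sum of two primes.

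First, I would fix an even integer $2n > 2c+2$ and form the target value $2m := 2(n-c)$. The hypothesis $2n > 2c+2$ gives $m > 1$, so $2m$ is an even integer with $2m \geq 4 > 2$. Hence Goldbach's conjecture for the primes (which we are assuming) yields primes $p,q \in P$ with $p+q = 2m = 2n-2c$.

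Next, I would set $p_c := p+c$ and $q_c := q+c$. By the very definition $P_c = \{x+c : x \in P\}$ these lie in $P_c$, and
\begin{equation}
p_c + q_c = (p+c)+(q+c) = (p+q)+2c = (2n-2c)+2c = 2n,
\end{equation}
which is the desired Goldbach decomposition of $2n$ inside $P_c$. This completes the argument.

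The only subtlety I anticipate, and the part that deserves a brief comment in the write-up, is the role of the bound $2n > 2c+2$: it is precisely what is needed to ensure that the shifted target $2(n-c)$ is an even integer strictly greater than $2$, so that Goldbach's conjecture for primes is applicable. For negative $c$ the statement becomes stronger (it holds for all even $2n$ beyond a smaller threshold), while for positive $c$ one must restrict to $2n > 2c+2$; in either case no genuine obstacle arises, because the proof is purely a translation argument and no arithmetic property of primes beyond Goldbach itself is invoked.
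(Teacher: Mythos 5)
Your proposal is correct and follows essentially the same route as the paper: shift the target to $2n-2c>2$, apply Goldbach's conjecture for the primes to obtain $p+q=2n-2c$, and translate back via $p_c=p+c$, $q_c=q+c$. Your added remark on the role of the bound $2n>2c+2$ and the sign of $c$ is a small clarification beyond what the paper states, but the argument is identical.
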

\begin{proof}
Assume the Goldbach conjecture holds true for $n>2$. We want to show that there exist $p_c, q_c \in P_c$ such that $p_c+q_c=2n$ for every even integer $2n>2c+2$.  For $2n>2c+2$, we have $2n-2c>2$. Thus, due to the Goldbach conjecture, there exist prime numbers $p$ and $q$ such that $p+q=2n-2c$. Consequently, $p+c ,q+c \in P_t$ and \[ (p+c)+(q+c)=(p+q)+2c=2n.\] Therefore, the Goldbach conjecture also holds true for $P_c$.
\end{proof}
Altough the Theorem \ref{asl}  is very simple, but it has an important result. The Goldbach conjecture holds true for a set whose elements are not prime, but with the same distribution that prime numbers have. It stimulate us to ask whether "being prime" is a necessarily key concept in the Goldbach conjecture?

%Before we go to the next lemma, we reformulate the conjecture as follows and we use the following representation in the next lemma and its proof. \textit{Goldbach Conjecture: For any integer  $n>2$, there exist prime numbers $p$ and $q$ such that $n$ is exactly in the middle of $p$ and $q$. In other words, $q-n=n-p$.}

\begin{conjecture}
(Generalization of Goldbach Conjecture)
Let $Q$ be a subset of natural numbers whose distribution is similar to primes. There exists $N_Q \in \mathbb{N}$, such that for any even integer $2n$ greater than $N_Q$, there exist $q_1, q_2 \in Q$ such that $q_1+q_2=2n$.
\end{conjecture}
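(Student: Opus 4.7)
The plan is to transport the set-intersection framework introduced for primes in the Introduction to an arbitrary subset $Q$ whose distribution is similar to primes. For each even integer $2n$, I would define the analogues
\[
A_n^Q = \{\, n - q : q \in Q,\ q < n \,\}, \qquad B_n^Q = \{\, q - n : q \in Q,\ n < q < 2n \,\},
\]
so that $2n$ admits a representation $q_1 + q_2 = 2n$ with $q_1, q_2 \in Q$ if and only if $A_n^Q \cap B_n^Q \neq \emptyset$. The hypothesis that $Q$ has prime-like distribution immediately yields $|A_n^Q|, |B_n^Q| \geq \pi(n) - c - O(1)$, so both sets have cardinality $\Theta(n/\ln n)$, exactly as in the prime case.

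Next I would import the probabilistic model developed in Section 3 of the paper. Since only the counts $\pi_Q$ enter the argument, one may treat the elements of $Q$ in $[n, 2n]$ as being randomly scattered among $\{n+1, \dots, 2n-1\}$ at the prescribed density $1/\ln n$, and bound
\[
\mathbb{P}\!\left(A_n^Q \cap B_n^Q = \emptyset\right) \leq \left(1 - \frac{|A_n^Q|}{n}\right)^{|B_n^Q|}.
\]
Plugging in $|A_n^Q|, |B_n^Q| \sim n/\ln n$ produces a per-$n$ failure probability of order $\exp(-n/(\ln n)^2) = \exp(-n^\alpha)$ with $\alpha = 1 - 2\ln\ln n/\ln n$, matching the estimate already obtained for primes. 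Summing over $n \geq N_Q/2$ the series is geometrically dominated by its first term, so the total failure probability above $N_Q$ is at most $e^{-(N_Q/2)^\alpha}$ up to lower-order factors. Choosing $N_Q$ beyond a range where direct computational verification is available for the particular $Q$ (as done in Section \ref{comp} for the random $\pm 1$ shifts of primes up to $2 \times 10^8$) then gives a vanishing bound on the chance of any counterexample, paralleling the treatment of $P$ itself.

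The main obstacle is exactly the obstacle that keeps the classical Goldbach conjecture open: the argument above is a probabilistic heuristic, not a deterministic proof. The similarity-of-distribution hypothesis is only a crude cardinality condition and places no restriction on the additive structure of $Q$ — there is no analogue here of the singular series or of any Hardy--Littlewood-type lower bound on the number of representations. In principle an adversarial $Q$ could be constructed that concentrates on an arithmetic progression avoiding certain residues mod small primes, producing infinitely many exceptions while still satisfying $|\pi_Q(n) - \pi(n)| < c$. Proving the conjecture as stated would therefore require either (i) strengthening the definition of ``prime-like distribution'' to include some genuine pseudorandomness condition (e.g.\ Chen-type sieve axioms or level-of-distribution hypotheses) under which the probabilistic heuristic can be made rigorous, or (ii) a deterministic replacement for the intersection bound in the spirit of Montgomery--Vaughan's exceptional-set argument. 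Pending either of these, the route outlined above supplies only what the paper is already offering: an overwhelming probabilistic guarantee, backed by computational verification up to $N_Q$.
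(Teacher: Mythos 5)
The statement you were asked to prove is labelled a \emph{conjecture} in the paper, and the paper in fact offers no proof of it: it is supported only by the special case of Theorem~\ref{asl} (shifted primes $P_c$), by the probabilistic heuristic of Section~3 (Lemma~\ref{f} and Theorem~\ref{asl_prob}, which bound a \emph{probability} of failure under a random model for the elements of $Q$), and by computational verification up to $2\times 10^{8}$ for sets obtained from the primes by random $\pm 1$ shifts. Your proposal reconstructs essentially that same heuristic --- the sets $A_n^Q$ and $B_n^Q$, the intersection criterion, the bound $\bigl(1-|A_n^Q|/n\bigr)^{|B_n^Q|}$, and the summation over $n\geq N$ --- so on that score you are aligned with what the paper actually does, and you are right to state explicitly that this is a heuristic under an unjustified independence assumption, not a proof of the conjecture.

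Your closing objection deserves emphasis, because it is more than a hypothetical worry: it indicates that the conjecture as stated is likely false. The condition $|\pi_Q(n)-\pi(n)|<c$ constrains only the counting function of $Q$, not its arithmetic structure. One can select a subset $Q$ of the multiples of $4$ whose counting function tracks $\pi(n)$ to within a bounded error (a greedy selection works, since by the Brun--Titchmarsh inequality any sufficiently long interval contains more multiples of $4$ than primes, so the running discrepancy of the greedy choice stays bounded by an absolute constant). Such a $Q$ has a distribution similar to the primes in the paper's sense, yet $q_1+q_2\equiv 0\pmod 4$ for all $q_1,q_2\in Q$, so no even number congruent to $2$ modulo $4$ is ever represented and the conclusion fails for infinitely many $2n$. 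This is exactly the residue-class obstruction you describe, and it shows that any rigorous version of the conjecture must strengthen the hypothesis on $Q$ (equidistribution in residue classes, or a sieve-type pseudorandomness axiom) along the lines you propose. In short: your account faithfully reproduces the paper's evidence for the statement, and your diagnosis of why that evidence cannot be upgraded to a proof --- indeed, of why the statement needs amending --- is correct.
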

Furthermore, we verified this generalization empirically. We constructed several new subsets with distributions resembling the primes by randomly adding \(\pm 1\) to the prime numbers. We then tested the Goldbach conjecture for these modified sets up to \(2n \leq 2 \times 10^8\) using computational methods. As expected, the Goldbach conjecture holds for these reconstructed sets when \(2n > 40\).

\section{The Probability of violation of Conjecture}
In the previus section, the generalization of the Goldbach conjecture is suggested and it is  verified practically for different subsets whose distribution are similar to primes. We saw that the Generalized Conjecture holds true for \(2n \leq 2 \times 10^8\).

Suppose there exists a prime $p < n$ and a prime $q > n$ such that their distances to $n$ are equal, that is,
$n - p = q - n$.
This immediately implies that
$
2n = p + q.
$
We can therefore interpret the satisfaction of Goldbach's condition for an even number $2n$ as a probabilistic event dependent on the distribution of primes.  Consequently, we may compute the probability of the conjecture holding (or failing) for $2n$ purely in terms of the distribution of primes among integers. Crucially, this approach allows us to estimate the likelihood of the conjecture being satisfied independently of specific primality tests, relying instead solely on the distributional properties of primes.

In this section, we compute the probability of violating the generalized Goldbach conjecture under the assumption that the conjecture holds for all even numbers below $2N$. We demonstrate that this violation probability becomes vanishingly small for sufficiently large $N$. Specifically, we prove that if the Goldbach conjecture is valid for all even integers less than $2N$, then the probability of a counterexample existing is bounded above by $e^{-\sqrt{N}}$.

Given that the Goldbach conjecture has been numerically verified up to $4 \times 10^{18}$ \cite{oliveira2014empirical}, our results imply an extraordinarily small upper bound for the existence of counterexamples - less than $e^{-\sqrt{2 \times 10^{18}}}$. This bound is so exceptionally stringent that it effectively precludes the possibility of finding any counterexample in practice.
%In this section we compute the probability of violating the generalized conjecture, provided that the generalized conjecture holds true for even numbers less than $2N$. we show that this probability is extremely low, provided that $N$ is large enough.  we prove that if the Goldbach conjecture holds true for even number less than $2N$, then the probability of violating the conjecture is significantly less than $e^{-\sqrt{n}}$. Since the Goldbach conjecture is verified for even number to $2 \times 10^{18}$, this probability is low enough to say that it is impossible to find a counterexample for the conjecture.
%
%Suppose that we have checked the correctness of the conjecture for all even number less than \(2n \leq 2 \times 10^8\). Now, we want to compute the proabilty that we can find a counterxample even number in remained not verified number, i.e. the even number more than \(2n \leq 2 \times 10^8\). Therfore, we compute the probabity that the conjecture does not hold for a large evevn number.
 More intriguingly, the violation probability diminishes exponentially with increasing $N$, making counterexamples practically non-existent at astronomical scales.

\begin{theorem} \label{asl_prob}
Let $Q$ be  a prime-like distribution subset. The probability that generalized Goldbach conjecture violates for $Q$ is less than
\begin{equation}
 e^{-\sqrt{N}}
\end{equation}
provided that the conjecture holds true for $Q$ for even numbers up to $2N$. (gratear than $N_0$)
\end{theorem}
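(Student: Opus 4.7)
The plan is to bound the per-$n$ violation probability by modelling the sets $A_n$ and $B_n$ as independent uniformly random subsets of $\{1,\ldots,n\}$ with the sizes predicted by the prime-like density hypothesis, and then to union-bound over all $n\ge N$. Since the argument uses only the cardinalities $\pi_Q$, it will apply uniformly to any prime-like $Q$, which is the whole point of Definition~1.

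First I would estimate $|A_n|$ and $|B_n|$. Using $|\pi_Q(m)-\pi(m)|<c$ together with $\pi(m)\sim m/\ln m$, both $|A_n|=\pi_Q(n-1)$ and $|B_n|=\pi_Q(2n)-\pi_Q(n)$ are asymptotic to $n/\ln n$, and both live inside $\{1,\ldots,n-1\}$. Under the random-subset model, the chance of disjointness is the hypergeometric tail
\[
\Pr[A_n\cap B_n=\emptyset]=\frac{\binom{n-|A_n|}{|B_n|}}{\binom{n}{|B_n|}}\le\Bigl(1-\frac{|A_n|}{n}\Bigr)^{|B_n|}\le\exp\!\Bigl(-\frac{|A_n|\,|B_n|}{n}\Bigr),
\]
so substituting the estimates above gives a per-$n$ violation probability of $\exp(-n/\ln^2 n)$ up to lower-order corrections. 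This is essentially the $\exp(-N^\alpha)$ exponent announced in the abstract, with $\alpha=1-2\ln\ln N/\ln N$.

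Next I would combine these per-$n$ bounds via a union bound and compare to $\sqrt N$:
\[
\Pr[\exists\, n\ge N:\text{Goldbach fails for }2n]\le\sum_{n\ge N}\exp(-n/\ln^2 n)\le C\exp(-N/\ln^2 N),
\]
the last step obtained by dominating the super-polynomially decaying tail by its first term times a geometric factor. Since $\sqrt N=o(N/\ln^2 N)$, for $N$ large the exponent $-N/\ln^2 N$ beats $-\sqrt N$ by more than $\ln C$, yielding the advertised bound $\exp(-\sqrt N)$.

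The main obstacle is conceptual rather than computational: the sets $A_n,B_n$ are deterministic once $Q$ is fixed, so "probability" in the statement must be read against a canonical null model in which one retains only the prime-like density and chooses the remaining positions uniformly; once that convention is pinned down the arithmetic is routine. The only genuinely delicate technical point is keeping the constant $C$ in the tail bound harmless enough that the exponent truly collapses to $-\sqrt N$ rather than $-\sqrt N+O(1)$, which requires handling the $o(1)$ corrections in $|A_n|,|B_n|$ uniformly in $n\ge N$.
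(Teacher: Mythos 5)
Your proposal follows essentially the same route as the paper: the same hypergeometric disjointness probability for $A_n$ and $B_n$ yielding the per-$n$ bound $\exp(-n/\ln^2 n)$ (this is the paper's Lemma~\ref{f}), followed by summing over $n \ge N$ and showing the tail is dominated by $e^{-\sqrt{N}}$. The only cosmetic difference is in the last step, where the paper compares the summand pointwise to $e^{-\sqrt{n}}/(2\sqrt{n})$ and integrates exactly, while you bound the tail by a constant times its first term and absorb the constant into the exponent gap; both rest on the same heuristic randomness assumption about $A_n$ and $B_n$, which you correctly flag as the conceptual crux.
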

Theorem~\ref{asl_prob} has considerable consequences. Since the Goldbach conjecture has been verified
for all even numbers less than $2 \times 10^{18}$ in practice, we conclude that the probability that the
conjecture does not hold is less than $e^{-10^9}$. This probability is extremely low, effectively
zero for practical purposes.
In addition, we have shown that the generalized conjecture holds true for even numbers less than $10^8$. Therefore, the probability of violation is less than $e^{-10^4}(\approx 10^{-4343})$.

To prove Theorem \ref{asl_prob}, first we present and prove the following lemma.
\begin{lemma}\label{f}
Let $Q$ be a subset of natural numbers whose distribution  is similar to prime numbers. The probability that  $p, q $ does not exist in $Q$ such that $2n=p+q$ is less than
\begin{equation}  \mathbb{P}(n)= \exp{(-n/\ln^2{n})}
\end{equation}
\end{lemma}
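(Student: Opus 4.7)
\noindent
My plan is to translate the intersection condition $A_n\cap B_n\neq\emptyset$ into a combinatorial probability, then use the prime-like distribution hypothesis to pin down the sizes of $A_n$ and $B_n$, and finally apply the standard $(1-x)^y\approx e^{-xy}$ estimate.

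\noindent
First I would set up the two sets explicitly for $Q$: $A_n^Q=\{n-q:q\in Q,\; q<n\}$ and $B_n^Q=\{q-n:q\in Q,\; n<q<2n\}$. Both are subsets of $\{1,2,\dots,n-1\}$, and the conjecture for $2n$ fails precisely when $A_n^Q\cap B_n^Q=\emptyset$. Using the prime-like distribution assumption $|\pi_Q(m)-\pi(m)|<c$ together with $\pi(m)\sim m/\ln m$, I would show that
\begin{equation}
|A_n^Q|=\pi_Q(n)-O(1)\;\sim\;\frac{n}{\ln n},\qquad
|B_n^Q|=\pi_Q(2n)-\pi_Q(n)-O(1)\;\sim\;\frac{n}{\ln n},
\end{equation}
so both sets occupy a fraction roughly $1/\ln n$ of the ambient interval $\{1,\dots,n-1\}$.

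\noindent
Next I would invoke the probabilistic heuristic underlying the paper: treat $B_n^Q$ as a uniformly random size-$|B_n^Q|$ subset of $\{1,\dots,n-1\}$ (equivalently, treat its elements as i.i.d.\ uniform, following Cram\'er-type modeling of prime-like sets). Conditioning on $A_n^Q$, the probability that the two sets are disjoint is
\begin{equation}
\mathbb{P}(A_n^Q\cap B_n^Q=\emptyset)=\frac{\binom{n-1-|A_n^Q|}{|B_n^Q|}}{\binom{n-1}{|B_n^Q|}}\;\le\;\left(1-\frac{|A_n^Q|}{n-1}\right)^{|B_n^Q|}.
\end{equation}
Substituting the estimates above and using $\ln(1-x)\le -x$ gives the bound
\begin{equation}
\mathbb{P}(A_n^Q\cap B_n^Q=\emptyset)\;\le\;\left(1-\frac{1}{\ln n}\right)^{n/\ln n}\;\le\;\exp\!\left(-\frac{n}{\ln^2 n}\right),
\end{equation}
which is exactly $\mathbb{P}(n)$.

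\noindent
The routine parts are the size estimates and the final exponential approximation. The main obstacle, in my view, is conceptual rather than computational: justifying the uniform-random model for $B_n^Q$ (or, equivalently, for the shifted $Q$-elements in $(n,2n)$). This is a Cram\'er-style heuristic rather than a rigorous probabilistic statement about a specific deterministic set, so I would frame the lemma's ``probability'' as the probability under this natural model, and be careful to note that the $O(1)$ errors coming from the prime-like definition are absorbed into the constants without affecting the dominant $n/\ln^2 n$ term in the exponent.
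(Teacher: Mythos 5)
Your proposal follows essentially the same route as the paper: the same sets $A_n$, $B_n$, the same combinatorial disjointness probability $\binom{n-|A_n|}{|B_n|}/\binom{n}{|B_n|}\le(1-|A_n|/n)^{|B_n|}$, the same size estimates $k_1\approx k_2\approx n/\ln n$ from the prime-like hypothesis, and the same final bound $(1-1/\ln n)^{n/\ln n}\le\exp(-n/\ln^2 n)$. Your explicit caveat that the ``probability'' only makes sense under a Cram\'er-type random model is a point the paper leaves implicit, but the argument itself is the same.
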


\begin{proof}
if there exist $q_1, q_2 \in Q$, we can rewrite it in the form
 $$n-q_1=q_2-n$$
  Due to this form of writing, we can say $n$ is exactly  allocated in the middle of $q_1$ and $q_2$. Thus we account the distances of primes less than to $n$ and also the distances of  $n$ to primes which are more than $n$. If we can find an equal value in these two subset, we have found two primes that $n$ is placed exactly middle of them.

Let $A_n$ be the set of  distances of $n$  to the elements of $Q$ which are less than or equal to $n$. Let $B_n$ be the distances of $n$ to the elements of $Q$ between $n$ and $2n$, respectively. That is,
\begin{equation}
A_n := \lbrace n-q \mid \text{$ q\leq n$ and $ q \in Q$}  \rbrace
\end{equation}
\begin{equation}
B_n := \lbrace q-n \mid    \text{$ n \leq q < 2n$ and $ q \in Q$} \rbrace
\end{equation}
Clearly, we have  $A_n , B_n \subset \{ 0,1, \dots, n-1\}$.  The Goldbach conjecture does not hold true for even integer $2n$ if and only if  we have $A_n \cap B_n = \emptyset$.
 %The subsets of $A_n$ and $B_n$ are the distances of $n$ to prime numbers. In fact, the  event that $A_n$ and $ B_n $ have nonempty intersection depends on the distribution of prime numbers in the natural numbers.  Therefore, the conjecture can be generalized to any subset of natural numbers whose distribution is similar to prime numbers.
 %Of course, as we will see in section \ref{cof} being prime results in a coefficient that make the probability damps faster. However, the main reason of damping the probability of violation is distribution, not being prime.  \\ In the above statement being prime is not a key concept. What is important, is the density of prime numbers. It means that any subset of numbers with similar density  to prime numbers make the above statement to  hold true.
% In the next section, we deal with  some subsets of natural numbers whose distribution in natural number is similar to prime numbers.

 Let  $\vert A_n \vert= k_1$ and  $\vert B_n \vert= k_2$. Since  $A_n , B_n \subset \{0, 1, \dots, n-1\}$, the probability that $A_n \cap B_n = \emptyset $ is
\begin{equation}
\mathbb{P}(n)=  \frac {       {n \choose{k_1}}         {n-k_1 \choose{k_2}}        } { {{n}\choose{k_1}}   {{n}\choose{k_2}}}
\end{equation}

This formula is similar to \cite{clarke198367} where the possible sum on primes are considered.  Here, we have defined subsets $A_n$ and $B_n$ which provides more intuition. %In addition, as we will see in the section \ref{cof}, the definition of subsets $A_n$ and $B_n$ makes it possible to compute probability more exactly for the prime numbers.
We have $ \pi_Q(n) \approx \frac {n} {\ln{n}}$ for large $n$. Therefore, there are $\frac {n} {\ln{n}}$ number of $Q$ elements between 1 and $n$ and $\frac{2n} {\ln{2n}}- \frac {n}{\ln{n}} \approx \frac{n}{\ln{n}}$  $Q$ elements between $n$ and $2n$.
Substituting $k_1=k_2=\frac {n} {\ln{n}}$ in the above equation, we have
\begin{equation}
 \mathbb{P}(n)< (\frac{n-k_1} {n})^{k_2}=(\frac{n-n/\ln{n}} {n})^{n/\ln{n}}=(1-\frac {1}{\ln {n}})^{n/\ln{n}}
\end{equation}
For large $n$, we have
\begin{equation} \mathbb{P}(n)< \exp{(-n/\ln^2{n})}
\end{equation}  $\square$
\end{proof}

According to Lemma \ref{f}, the probability of violating the Goldbach conjecture for large even number $2n$  for any subset of natural numbers whose distribution is similar to prime numbers is less than  $  \mathbb{P}(n)=\exp{(-n/\ln^2{n})}$.
Here, we have the probability that $n$ does not hold the conjecture.
Now, we are ready to prove Theorem \ref{asl_prob}.

\begin{proof}(Theorem \ref{asl_prob})\\
According to Lemma \ref{f}, the probability of violating the Goldbach conjecture for large even number $2n$  for any subset of natural numbers whose distribution is similar to prime numbers is less than  $  \mathbb{P}(n)=\exp{(-n/\ln^2{n})}$.
 To compute the probability of violating the conjecture, we should sum the probability for all possible even numbers more than $2N$, provided that the conjecture is verified for even numbers less than $2N$. That is, the probability of violating the conjecture is
\begin{equation}
 \mathbb{P}=\sum_{n=N}^{\infty} \mathbb{P}(n)< \sum_{n=N}^{\infty} \exp{(-n/\ln^2{n})}
\end{equation}
To compute this summation, we compute the following integral
\begin{equation}
 \sum_{n=N}^{\infty} \exp{(-n/\ln^2{n})} \approx  \int_{n=N}^{\infty} \exp{(-n/\ln^2{n})}dn
\end{equation}
Since $N$ is a sufficiently large number, we have
\begin{equation}
\frac{n}{\ln^2{n}}>\sqrt{n}+\ln{2\sqrt{n}}
\end{equation}
Therefore,

\begin{equation}
\exp{(-\frac{n}{\ln^2{n}})}<e^{-(\sqrt{n}+\ln{2\sqrt{n}})}=\frac {e^{-\sqrt{n}}}{{2\sqrt{n}}}
\end{equation}
Then,
\begin{equation}
\int_{N}^{\infty}e^{-\frac{n}{\ln^2{n}}}dn<\int_{N}^{\infty}\frac {e^{-\sqrt{n}}}{{2\sqrt{n}}}dn=e^{-\sqrt{N}}
\end{equation}
\end{proof}
The result of the previous theorem is improved in the next theorem. Instead of $e^{-\sqrt{N}}$, $e^{-N^\alpha}$ is suggested where $1/2<\alpha<1$.
\begin{theorem}
Assuming the Goldbach conjecture holds true for all even numbers less than $2N$, the probability that Goldbach conjecture does not hold for $Q$ is less than
$ e^{-N^\alpha}$ where $\alpha= 1- \frac{2\ln{\ln{N}}}{\ln{N}}$.
\end{theorem}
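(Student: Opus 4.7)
The guiding observation is that the exponent in the target bound exactly matches the per-$n$ bound from Lemma~\ref{f}: since
\[
N^\alpha \;=\; e^{\alpha \ln N} \;=\; e^{\ln N - 2\ln\ln N} \;=\; \frac{N}{\ln^2 N},
\]
the theorem asserts the sharper statement that the tail sum $\sum_{n \ge N} e^{-n/\ln^2 n}$ is essentially controlled by its leading term, which is a strict improvement over the coarser $e^{-\sqrt{N}}$ of Theorem~2 obtained by sacrificing a logarithm in the exponent.

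My plan is to re-evaluate the tail integral from Theorem~2's proof, but using the natural change of variables dictated by Lemma~\ref{f}, namely $u = n/\ln^2 n$. This gives $du = \frac{\ln n - 2}{\ln^3 n}\,dn$, so $dn \sim \ln^2 n\,du$ for large $n$; inverting the relation asymptotically yields $n \sim u \ln^2 u$ and hence $\ln^2 n \sim \ln^2 u$. The tail integral therefore transforms into
\[
\int_N^\infty \exp\!\Bigl(-\frac{n}{\ln^2 n}\Bigr)\,dn \;\sim\; \int_{N/\ln^2 N}^\infty \ln^2 u \cdot e^{-u}\,du,
\]
which is a standard incomplete-gamma tail. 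A single integration by parts (or the asymptotic $\Gamma(k{+}1,M)\sim M^k e^{-M}$ as $M\to\infty$) gives $\int_M^\infty \ln^2 u\cdot e^{-u}\,du \sim \ln^2 M \cdot e^{-M}$, and taking $M = N/\ln^2 N$ this is $\sim \ln^2 N \cdot e^{-N/\ln^2 N} = \ln^2 N\cdot e^{-N^\alpha}$.

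The main obstacle I anticipate is the absorption of the polynomial prefactor $\ln^2 N$ into the exponential. Since $\ln^2 N = e^{2\ln\ln N}$ and $2\ln\ln N = o(N^\alpha)$, the prefactor is dominated by the exponential for large $N$, but only if one accepts an arbitrarily small weakening of $\alpha$. Concretely, one verifies $N/\ln^2 N - 2\ln\ln N \ge N^{\alpha'}$ for $\alpha' = 1 - (2+\epsilon)\ln\ln N / \ln N$ with $\epsilon>0$ arbitrarily small and $N$ sufficiently large; this is routine once the asymptotics above are in hand, and fits the informal ``$\approx$'' used for $\alpha$ in the abstract and introduction.
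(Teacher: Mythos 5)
Your argument is sound and takes a genuinely different route from the paper's. The paper defines $\alpha$ implicitly by Equation~(\ref{alpha}), dominates the integrand pointwise by $\alpha e^{-n^{\alpha}}n^{\alpha-1}$ (the exact derivative of $-e^{-n^{\alpha}}$), integrates in closed form to get precisely $e^{-N^{\alpha}}$, and only afterwards approximates that implicit $\alpha$ by $1-\frac{2\ln\ln N}{\ln N}$. You instead start from the exact identity $N^{\alpha}=N/\ln^{2}N$ for the stated $\alpha$, evaluate the tail directly via the substitution $u=n/\ln^{2}n$ and the incomplete-gamma asymptotic, and obtain the true order $\ln^{2}N\cdot e^{-N/\ln^{2}N}$. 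What your route buys is an honest accounting of the constant: it exposes that the claimed bound $e^{-N^{\alpha}}=e^{-N/\ln^{2}N}$ is already exactly the single term $\mathbb{P}(N)$ from Lemma~\ref{f}, so the full tail cannot lie below it and either a $\ln^{2}N$ prefactor or your $\epsilon$-weakening of $\alpha$ is genuinely unavoidable. The paper's route conceals this in the passage from the implicit to the explicit $\alpha$, and in fact the lower-order terms $(\alpha-1)\ln n+\ln\alpha$ enter Equation~(\ref{alpha}) with the wrong sign: the bound $e^{-n/\ln^{2}n}<\alpha e^{-n^{\alpha}}n^{\alpha-1}$ requires $\frac{n}{\ln^{2}n}>n^{\alpha}-(\alpha-1)\ln n-\ln\alpha$, which is the reverse correction and fails at $n=N$ by exactly the factor your analysis predicts. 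What the paper's route buys is an exact antiderivative with no asymptotic steps to justify; to make your version airtight you would still need to convert the three ``$\sim$'' steps (the Jacobian $dn\sim\ln^{2}u\,du$, the inversion $\ln^{2}n\sim\ln^{2}u$, and the gamma-tail estimate) into one-sided inequalities valid for all $n\ge N$, and to bound the sum by the integral plus the single term $\mathbb{P}(N)$, but none of these presents any real difficulty.
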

\begin{proof}
We know that  $n^{\frac{1}{2}}<<\frac{n}{\ln^2{n}}$ for large $n$. We can improve $n^{\frac{1}{2}}$ to $n^\alpha$ where $\frac{1}{2}<\alpha<1$. Suppose  $\frac{1}{2}<\alpha<1$ is such that
\begin{equation} \label{alpha}
 \frac{N}{\ln^2{N}} = N^\alpha +(\alpha-1)\ln{N}  +\ln{\alpha}
\end{equation}

Since the growth rate of  left side of the above equation is significantly more than righthand, we have
\begin{equation}
\frac{n}{\ln^2{n}} >n^\alpha +(\alpha-1)\ln{n}  +\ln{\alpha}    \text{      \space{  }   for \( n> N \)}
\end{equation}
Thus,
\begin{equation}
e^{-\frac{n}{\ln^2{n}}} <{\alpha}  e^{-n^\alpha} n^{\alpha-1}
\end{equation}
Consequently, we have
\begin{equation}
\int_{N}^{\infty} e^{-\frac{n}{\ln^2{n}}} dn < \int_{N}^{\infty} \alpha e^{-n^\alpha} n^{\alpha-1} dn =e^{-N^\alpha}
\end{equation}
In Equation \ref{alpha}, value of $(\alpha-1)\ln{N}  +\ln{\alpha}$ is negligible to $N^\alpha$ for large $N$, thus the value of $\alpha$ can be estimated by
% Since $(\alpha-1)\ln{n}  +\ln{\alpha}$ is negilible to $ \frac{N}{\ln^2{N}} $ and $N^\alpha$ for large $N$, we can stimate $\alpha$ by
\begin{equation}
\alpha \approx 1 - \frac{2\ln{\ln{N}}}{\ln{N}}
\end{equation}
%Let $N^\alpha =\frac{N}{\ln^2{N}}$. Thus,
%$\alpha= 1- \frac{2\ln{\ln{N}}}{\ln{N}}$.
%Since the growth rate of the function $n^\alpha$  is less than $\frac{n}{\ln^2{n}}$, For $N<n$ we have
%$\frac{n}{\ln^2{n}}>n^\alpha$.
%It result that  for $N<n$, we have
%$e^{-\frac{n}{\ln^2{n}}}<e^{-n^\alpha}$.
%Therfore, we can approximate
%
%$  \int_{n=N}^{\infty} \exp{(-n/\ln^2{n})}dn< \int_{n=N}^{\infty} \exp{n^{-\alpha}} dn \approx   \int_{n=N}^{\infty} \exp{n^{-\alpha}} n^{\alpha-\alpha-1}dn $ by

\end{proof}
\begin{corollary} Since the Goldbach Conjecture is verified for even number no mare than $4 \times 10^{18}$ \cite{oliveira2014empirical}, the failure probability of conjecture is less than $e^{-10^{15}}$.
\end{corollary}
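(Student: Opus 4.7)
The plan is to apply the preceding theorem with $N = 2\times 10^{18}$ and then verify by direct numerical substitution that the resulting exponent $N^\alpha$ is at least $10^{15}$, so that the bound $e^{-N^\alpha}$ already implies the stated $e^{-10^{15}}$.

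First I would fix the value of $N$: since \cite{oliveira2014empirical} verifies the Goldbach conjecture for every even integer up to $4\times 10^{18}$, we may take $2N = 4\times 10^{18}$, i.e.\ $N = 2\times 10^{18}$, so the hypothesis of the previous theorem is satisfied and its conclusion gives a violation probability less than $e^{-N^\alpha}$ with $\alpha \approx 1 - \frac{2\ln\ln N}{\ln N}$.

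Next I would carry out the elementary numerical estimate for $\alpha$. Using $\ln N = 18\ln 10 + \ln 2 \approx 42.14$ and $\ln\ln N \approx \ln(42.14) \approx 3.74$, we get $\alpha \approx 0.822$. Taking a base-$10$ logarithm then yields
\begin{equation}
\log_{10}(N^\alpha) \;=\; \alpha\,\log_{10} N \;\approx\; 0.822 \times 18.30 \;\approx\; 15.05,
\end{equation}
so that $N^\alpha \ge 10^{15}$. Substituting into $e^{-N^\alpha}$ gives the claimed bound $e^{-10^{15}}$.

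The only subtlety I anticipate is justifying the approximation to $\alpha$ itself at this particular value of $N$, because the corollary lives essentially at the boundary where $N^\alpha \approx 10^{15.05}$. In the defining identity
\begin{equation}
\frac{N}{\ln^2 N} \;=\; N^\alpha + (\alpha - 1)\ln N + \ln\alpha,
\end{equation}
the correction $(\alpha-1)\ln N + \ln\alpha$ has magnitude only of order $10$ when $N = 2\times 10^{18}$, whereas $N^\alpha$ is of order $10^{15}$. Hence replacing $\alpha$ by its closed-form estimate perturbs the exponent by a negligible amount, leaving the inequality $N^\alpha \ge 10^{15}$ intact; the corollary then follows immediately.
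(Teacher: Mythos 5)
Your proof is correct and follows essentially the same route as the paper: substitute the verified bound into the preceding theorem and evaluate $\alpha$ numerically. You are in fact more careful than the paper's own one-line proof, which takes $N = 4\times 10^{18}$ (rather than $N = 2\times 10^{18}$, as the theorem's hypothesis that the conjecture holds for even numbers less than $2N$ requires) and stops at $\alpha \approx 0.8$ without checking that $N^{\alpha}$ actually exceeds $10^{15}$; your boundary computation $\alpha \log_{10} N \approx 15.05$ supplies that missing verification.
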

\begin{proof}
Since $N=4 \times 10^{18}$ \cite{oliveira2014empirical}, we have $\alpha = 1- \frac{2\ln{\ln{4 \times 10^{18}}}}{\ln{4 \times 10^{18}}} \approx .8$.
\end{proof}
The value of the above probability is extremely low. It means that  the probability of finding counterexample in larger number is zero in practice.
%Finally, for sufficiently large $N_1$ and $N_2$, it seems that we can prove   $$(N_2-N_1)\int_{x=N_1}^{N_2}f(x) \approx 0$$

\section{Computational results}\label{comp}

Let us study the behavior of the function \(\mathbb{P}(n)\) introduced in Lemma \ref{f}. The function
\[
\mathbb{P}(n) = \exp\left(-\frac{n}{\ln^2 n}\right)
\]
is a damping function. We have
\[
\mathbb{P}(10000) < 10^{-51} \quad \text{and} \quad \mathbb{P}(40000) < 10^{-154}.
\]
As \(n\) grows, the probability that the conjecture does not hold for any subset whose distribution is similar to prime numbers tends to zero.

Moreover, the summation of \(\mathbb{P}(n)\) from \(N\) to infinity is negligible for large \(N\). For instance, we have
\begin{equation}
    \sum_{n=20000}^\infty \mathbb{P}(n) \approx 10^{-86}
    \label{eq:sum1}
\end{equation}
and
\begin{equation}
    \sum_{n=50000}^\infty \mathbb{P}(n) \approx 10^{-183}.
    \label{eq:sum2}
\end{equation}

This implies that finding a counterexample to the conjecture for a subset of natural numbers with a distribution similar to the primes is highly improbable. Therefore, the Goldbach conjecture, originally proposed for the prime numbers, can be generalized to subsets of natural numbers with a prime-like distribution.

\section{Conclusion}

Considering the distribution of primes among integers, we computed the probability $\mathbb{P}_n$ that there exist no primes $p$ and $q$ such that $2n = p + q$. Assuming the conjecture holds for even numbers less than $2N$, we then computed $\sum_{n=N}^{\infty} \mathbb{P}_n$ as the probability of violating the conjecture.

We found that the probability of violating the Goldbach Conjecture by this method is less than $ e^{-N^{\alpha}}  $ where 
$\alpha = 1 - \frac{2\ln{\ln{N}}}{\ln{N}}.$
Since the conjecture has been verified numerically for even numbers up to $N = 4 \times 10^{18}$ \cite{oliveira2014empirical}, the probability is less than $e^{-10^{15}}$. This value is extremely low and presents no counterexample in practice.

Moreover, since the computed probability is based on the distribution of primes rather than primality, we could generalize the Goldbach Conjecture to non-prime integers. It is sufficient to have a subset of integers whose distribution among integers resembles that of primes. We constructed such a subset by randomly adding $\pm 1$ to primes and then verified the conjecture for even numbers up to $2 \times 10^8$. As expected, the conjecture remains true for this new reconstructed subset.

\section*{Data Availability Statement:} Data sharing is not applicable to this article as no new data other than
 given in the paper were created or analyzed in this study.
\section*{Conflict of Interest Statement:} The authors declare that there is no conflict of interest.
%\bibliographystyle{plain}
%\bibliography{ref}

%
%\begin{biog}
%\item[Hamid Reza Fanai]
%\begin{affil}
%Department of Mathematical Sciences, Sharif University of Technology, Tehran, I. R. Iran\\
%Fanai@sharif.ir
%\end{affil}
%
%\item[Ameneh Farhadian]
%\begin{affil}
%Department of Mathematical Sciences, Sharif University of Technology, Tehran, I. R. Iran\\
%amenfarhad3@gmail.com
%\end{affil}
%\end{biog}

\end{document}